\theoremstyle{plain}
\newtheorem{theorem}{Theorem}
\newtheorem{lemma}{Lemma}
\newtheorem{propos}{Proposition}
\theoremstyle{definition}
\newtheorem{proof}{Proof}
\begin{document}

\begin{center}
\textbf{ERGODICITY VIA CONTINUITY}

\medskip

\textbf{I.V. Podvigin}\footnote{Sobolev Institute of Mathematics and
Novosibirsk State University; email:ipodvigin@math.nsc.ru}

\end{center}


\renewcommand{\abstractname}{Abstract}
\begin{abstract} We show that the ergodicity of an aperiodic automorphism of a Lebesgue
space is equivalent to the continuity of a certain map on a metric
Boolean algebra. A related characterization is also presented for
periodic and totally ergodic transformations.
\end{abstract}

\text{}

MSC2010: 37A25; 28D05; 54C05

\section{Introduction}

The main goal of this short note is to show that the ergodicity of
an aperiodic transformation $T$ of a Lebesgue space ${(\Omega,
\mathfrak{F}, \mu)}$ is equivalent to the continuity of a certain
transformation associated with $T$. There are many different but
equivalent definitions of ergodicity for measure preserving
transformations in the literature (see~\cite[\S2.3]{EW2011} for
example). Yet another criterion presented here seems to be new and
quite interesting.

Let ${(\mathcal{F},d)}$ be the metric space of $\mu$-equivalent
classes of $\mathfrak{F}$-measurable sets (a set
${A\in\mathfrak{F}}$ belongs to the class $[B]$ induced by a set
${B\in\mathfrak{F}}$ iff ${\mu(A\triangle B)=0}$). The metric~$d$ is
the Frechet--Nikodym metric defined as
$$
d([A],[B])=\mu(A\triangle B).
$$
Let ${\mathcal{N}\in\mathcal{F}}$ denote the class of sets of
$\mu$-measure zero. Given an automorphism $T,$ for each
$m\in\mathbb{N}$ define the map ${\phi^{(m)}_T:\mathcal{F}\to[0,1]}$
as
$$
\phi^{(m)}_T([A])=\mu\Bigl(\bigcup_{n=0}^mT^nA\Bigr).
$$
We also put
$$
\phi_T([A])=\lim_{m\to\infty}\phi^{(m)}_T([A])=\mu\Bigl(\bigcup_{n\geq0}T^nA\Bigr).
$$
The sequence ${\{\phi^{(m)}_T([A])\}_{m\in\mathbb{N}}}$ is called
the wandering rate of~$A$~\cite[\S3.8]{Aa97}.

It is well known that $T$ is ergodic iff ${\phi_T([A])=1}$ for every
${A\not\in\mathcal{N}}.$  It turns out equivalent to $\phi_T$ being
continuous everywhere except the point~$\mathcal{N},$ which is the
main statement (Theorem~\ref{Tm1}) in this note. We also present
related characterizations for periodic (Theorem~\ref{TmPeriodic})
and for totally ergodic transformations (Theorem~\ref{Tm2}).

\section{Continuity of the maps}

\noindent{\bf\thesection.1. Periodic transformation.} The first
local aim is to investigate the continuity of the transformations
${\phi^{(m)}_T}$, for $m\in\mathbb{N}.$ It is required for studying
the continuity of~${\phi_T}$ for periodic transformation~$T.$ The
continuity of ${\phi^{(m)}_T}$ is quite easy to prove by using the
methods of university measure theory courses. We give a proof of
this assertion for completeness of exposition.

\begin{lemma}\label{lemma1}  The
transformation ${\phi^{(m)}_T:\mathcal{F}\to[0,1]}$ is everywhere
continuous for each ${m\in\mathbb{N}}.$
\end{lemma}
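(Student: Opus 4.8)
The plan is to establish a Lipschitz bound for $\phi^{(m)}_T$ with respect to the Frechet--Nikodym metric, which will immediately give continuity (and, as a byproduct, well-definedness on $\mu$-equivalence classes). Two elementary observations drive the argument. First, since $T$ is a measure-preserving automorphism, each power $T^n$ induces an isometry of $(\mathcal{F},d)$: one has $T^n(A\triangle B)=T^nA\triangle T^nB$ (using invertibility of $T$), so $d([T^nA],[T^nB])=\mu(T^nA\triangle T^nB)=\mu(A\triangle B)=d([A],[B])$. Second, the symmetric difference of finite unions is controlled by the individual symmetric differences: $\bigl(\bigcup_{n=0}^m C_n\bigr)\triangle\bigl(\bigcup_{n=0}^m D_n\bigr)\subseteq\bigcup_{n=0}^m(C_n\triangle D_n)$, which is checked pointwise on indicator functions.

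First I would record the $1$-Lipschitz property of $\mu$ itself on $(\mathcal{F},d)$, namely $|\mu(C)-\mu(D)|\le\mu(C\triangle D)$ for all $C,D\in\mathfrak{F}$. Combining this with the two observations above yields, for all $A,B\in\mathfrak{F}$,
$$
\bigl|\phi^{(m)}_T([A])-\phi^{(m)}_T([B])\bigr|\le\mu\Bigl(\bigcup_{n=0}^mT^nA\,\triangle\,\bigcup_{n=0}^mT^nB\Bigr)\le\sum_{n=0}^m\mu\bigl(T^nA\triangle T^nB\bigr)=(m+1)\,d([A],[B]).
$$
Applying this with $d([A],[B])=0$ first shows that the defining expression does not depend on the chosen representative, so $\phi^{(m)}_T$ is a genuine map on $\mathcal{F}$; the same inequality then shows it is $(m+1)$-Lipschitz, hence continuous everywhere.

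There is essentially no hard step here; the only points requiring (routine) care are the verification that $T^n$ preserves symmetric differences modulo null sets — which uses invertibility and measure preservation of $T$ — and the set inclusion for symmetric differences of finite unions. I would present both as one-line computations with indicator functions and then assemble the displayed chain of inequalities to conclude.
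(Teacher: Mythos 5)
Your argument is the same as the paper's: the set inclusion $\bigl(\bigcup_{n=0}^m T^nA\bigr)\triangle\bigl(\bigcup_{n=0}^m T^nB\bigr)\subseteq\bigcup_{n=0}^m(T^nA\triangle T^nB)$, subadditivity, and measure preservation of $T$ yield the $(m+1)$-Lipschitz bound, hence continuity. Your added remark that the same inequality shows $\phi^{(m)}_T$ is well defined on $\mu$-equivalence classes is a small, correct bonus not spelled out in the paper.
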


\begin{proof}
It is evident that
$$
\Bigl(\bigcup_{n=0}^mT^nA\Bigr)\triangle\Bigl(\bigcup_{n=0}^mT^nB\Bigr)\subseteq\bigcup_{n=0}^m(T^nA\triangle
T^nB),
$$
and then
\begin{align*}
|\phi^{(m)}_T([A])&-\phi^{(m)}_T([B])|\leq\mu\Bigl(\Bigl(\bigcup_{n=0}^mT^nA\Bigr)\triangle\Bigl(\bigcup_{n=0}^mT^nB\Bigr)\Bigr)\leq\\
&\leq\mu\Bigl(\bigcup_{n=0}^m(T^nA\triangle
T^nB)\Bigr)\leq\sum_{k=0}^{m}\mu(T^n(A\triangle
B))=(m+1)\mu(A\triangle B).
\end{align*}
This completes the proof. It is worth noting that even the Lipschitz
property of $\phi^{(m)}_T$ follows from the proof.
\end{proof}

Recall the definitions of periodic and aperiodic transformations. A
point ${x\in\Omega}$ is called periodic for $T$ if there exists a
number ${n\in\mathbb{N}}$ with ${T^nx=x},$ and the smallest of these
numbers is called the period of~$x.$ Denote the set of periodic
points of period $n\in\mathbb{N}$ by $P_n$ and the set of aperiodic
points by $P_0$. It is clear that
$$
\Omega=\bigsqcup_{n\geq0}P_n.
$$
If ${\mu(P_0)=0}$ then the automorphism $T$ is called almost
everywhere periodic (or shortly periodic). If ${\mu(P_0)=1}$ then
$T$ is an aperiodic transformation.

The following proposition on the continuity of $\phi_T$ for periodic
automorphisms~$T$ is a corollary of Lemma~\ref{lemma1}.

\begin{propos}\label{ProposPeriod}
Let $T$ be a periodic automorphism of a probability space ${(\Omega,
\mathfrak{F}, \mu)}.$ Then the transformation $\phi_T$ is everywhere
continuous.
\end{propos}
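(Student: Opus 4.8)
The plan is to reduce $\phi_T$ to a finite object using periodicity, so that Lemma~\ref{lemma1} applies. The key observation is that for a periodic automorphism, the orbit of almost every point is finite, and in fact the periods are bounded on a large set — or at least, we can write $\Omega$ (up to measure zero) as a countable disjoint union $\bigsqcup_{n\geq 1} P_n$ with $\mu(P_0)=0$. On the invariant set $P_n$, every orbit has length at most $n$, so $\bigcup_{k\geq 0} T^k(A\cap P_n) = \bigcup_{k=0}^{n-1} T^k(A\cap P_n)$. Thus $\phi_T([A]) = \sum_{n\geq 1}\mu\bigl(\bigcup_{k=0}^{n-1}T^k(A\cap P_n)\bigr)$.

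**Main steps.**

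First I would fix $\varepsilon>0$ and choose $N$ so large that $\mu\bigl(\bigsqcup_{n>N}P_n\bigr)<\varepsilon/3$; this uses only $\sigma$-additivity of $\mu$ and $\mu(P_0)=0$. Then I would split $\phi_T([A])$ into the contribution from $P_{\leq N}:=\bigsqcup_{n=1}^N P_n$ and the tail. The tail contribution $\sum_{n>N}\mu\bigl(\bigcup_{k=0}^{n-1}T^k(A\cap P_n)\bigr)$ is at most $\mu\bigl(\bigsqcup_{n>N}P_n\bigr)<\varepsilon/3$, since each set $\bigcup_{k=0}^{n-1}T^k(A\cap P_n)\subseteq P_n$ (as $P_n$ is $T$-invariant) and these are disjoint across $n$. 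The same bound holds for $[B]$. On $P_{\leq N}$, which is $T$-invariant, we have $\bigcup_{k\geq 0}T^k(A\cap P_{\leq N}) = \bigcup_{k=0}^{N-1}T^k(A\cap P_{\leq N})$, so the restricted quantity is exactly $\phi^{(N-1)}_{T}([A\cap P_{\leq N}])$ computed within the space $P_{\leq N}$. By Lemma~\ref{lemma1} (applied to the restriction of $T$ to $P_{\leq N}$, or just by invoking the Lipschitz estimate from its proof), this differs from the analogous quantity for $B$ by at most $N\cdot\mu\bigl((A\triangle B)\cap P_{\leq N}\bigr)\leq N\cdot\mu(A\triangle B)$. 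Hence if $d([A],[B])<\varepsilon/(3N)$, the three pieces combine to give $|\phi_T([A])-\phi_T([B])|<\varepsilon$.

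**The main obstacle.**

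The only subtle point is bookkeeping with the decomposition: one must confirm that each $P_n$ is measurable and $T$-invariant (standard, since $P_n = \{x : T^n x = x\}\setminus\bigcup_{k<n}\{x: T^k x = x\}$ and $T$ permutes orbits), and that the interchange $\phi_T([A])=\sum_n \mu(\cdots\cap P_n)$ is legitimate — this follows from monotone convergence since the finite unions increase to $\bigcup_{k\geq 0}T^kA$ and the $P_n$ partition $\Omega$ mod null sets. Everything else is a direct combination of the tail estimate with the Lipschitz bound already extracted in the proof of Lemma~\ref{lemma1}. There is no genuine difficulty here; the proposition is, as the text says, a corollary of Lemma~\ref{lemma1}, and the proof is essentially an $\varepsilon/3$ argument splitting off a bounded-period piece.
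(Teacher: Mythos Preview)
Your proof is correct and follows essentially the same route as the paper: split off the bounded-period part $\bigcup_{n\leq N}P_n$, apply the Lipschitz estimate from Lemma~\ref{lemma1} there, and bound the tail by its total measure. The only cosmetic differences are that the paper uses $n_0!-1$ in place of your $N-1$ (either suffices to saturate the finite orbits) and an $\varepsilon/2$ rather than $\varepsilon/3$ split.
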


\begin{proof}  Since
${\sum_{n=1}^\infty\mu(P_n)=1},$ for arbitrary~${\varepsilon>0}$
there exists a number~${n_0=n_0(\varepsilon)\geq1}$ such that
$$
\sum_{n=n_0+1}^\infty\mu(P_n)<\varepsilon/2.
$$
For every~${A\in\mathfrak{F}},$ put
$$
A_{n_0}=A\bigcap\Bigl(\bigcup_{n=1}^{n_0}P_n\Bigr),
$$
and then
$$
\phi_T([A])=\phi_T([A_{n_0}])+\phi_T([A\setminus
A_{n_0}])=\phi^{(n_0!-1)}_T([A_{n_0}])+\phi_T([A\setminus A_{n_0}]).
$$
As soon as
$$
d([A],[B])=\mu(A_{n_0}\triangle B_{n_0})+\mu((A\setminus
A_{n_0}))\triangle(B\setminus B_{n_0})<\varepsilon/2n_0!,
$$
the last calculation in the proof of Lemma~\ref{lemma1} yields
\begin{align*}
|\phi_T([A])-\phi_T([B])|&\leq|\phi^{(n_0!-1)}_T([A_{n_0}])-\phi^{(n_0!-1)}_T([B_{n_0}])|+\\
&+\mu\Bigl(\bigcup_{n\geq0}T^n(A\setminus
A_{n_0})\triangle\bigcup_{n\geq0}T^n(B\setminus B_{n_0})\Bigr)\leq\\
&<\varepsilon/2+\mu\Bigl(\bigcup_{n=n_0+1}^\infty
P_n\Bigr)<\varepsilon/2+\varepsilon/2=\varepsilon.
\end{align*}
This completes the proof.
\end{proof}

The converse to Proposition~\ref{ProposPeriod} is discussed in the
next subsection.

\noindent{\bf\thesection.2. The points of continuity.} The following
statements describe in detail all points of continuity of $\phi_T.$

\begin{lemma}\label{Propos2}
Let $T$ be an automorphism of a Lebesgue space ${(\Omega,
\mathfrak{F}, \mu)}.$ If ${\phi_T([A])=1}$ then $[A]$ is a
continuity point of $\phi_T.$
\end{lemma}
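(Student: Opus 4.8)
The plan is to exploit the fact that $\phi_T$ is the pointwise increasing limit of the continuous maps $\phi^{(m)}_T$ from Lemma~\ref{lemma1}, together with the observation that $\phi_T$ never exceeds $1$. Thus $\phi_T$ is lower semicontinuous everywhere, and at a point where it attains its maximal possible value $1$, lower semicontinuity alone forces continuity. I would carry this out by an explicit $\varepsilon$--$\delta$ argument rather than invoking semicontinuity abstractly, since the quantitative version is short.

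First I would fix $\varepsilon\in(0,1)$. Since $\phi_T([A])=\mu\bigl(\bigcup_{n\geq0}T^nA\bigr)=\lim_{m\to\infty}\phi^{(m)}_T([A])=1$, there exists $m=m(\varepsilon)\in\mathbb{N}$ with $\phi^{(m)}_T([A])>1-\varepsilon/2$. Next, from the (Lipschitz) continuity of $\phi^{(m)}_T$ established in Lemma~\ref{lemma1}, choosing $\delta=\varepsilon/\bigl(2(m+1)\bigr)$ guarantees that $d([A],[B])<\delta$ implies $|\phi^{(m)}_T([A])-\phi^{(m)}_T([B])|<\varepsilon/2$, hence $\phi^{(m)}_T([B])>1-\varepsilon$.

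Finally, since $\bigcup_{n=0}^{m}T^nB\subseteq\bigcup_{n\geq0}T^nB$, we have $\phi_T([B])\geq\phi^{(m)}_T([B])>1-\varepsilon$, while trivially $\phi_T([B])\leq1=\phi_T([A])$. Therefore $|\phi_T([A])-\phi_T([B])|<\varepsilon$ whenever $d([A],[B])<\delta$, which is exactly the continuity of $\phi_T$ at $[A]$.

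I do not expect any genuine obstacle here: every ingredient is already available (Lemma~\ref{lemma1} for the finite-stage maps, the definition of $\phi_T$ as the monotone limit, and the a priori bound $\phi_T\leq1$). The only point requiring a little care is the direction of the inequalities: the argument only produces a lower bound on $\phi_T([B])$, and it is the ceiling $\phi_T([A])=1$ that converts this one-sided control into full continuity — so the statement genuinely uses the hypothesis $\phi_T([A])=1$ and would not follow for an arbitrary value.
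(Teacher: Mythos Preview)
Your argument is correct and considerably shorter than the paper's. You observe that $\phi_T$ is the increasing pointwise limit of the continuous maps $\phi^{(m)}_T$, hence lower semicontinuous, and that lower semicontinuity at a point where the function attains its global maximum $1$ forces continuity there; the explicit $\varepsilon$--$\delta$ version you wrote is clean and needs nothing beyond Lemma~\ref{lemma1}.

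The paper takes a genuinely different route. It invokes the ergodic decomposition $\Omega=\bigcup_{\alpha\in I}\Omega_\alpha$, defines on the index set $I$ the measures $\nu_C(J)=\mu\bigl(\bigcup_{\alpha\in J}C_\alpha\bigr)$, and shows that the hypothesis $\phi_T([A])=1$ forces $\nu_A$ and $\nu_\Omega$ to be mutually absolutely continuous. The $\varepsilon$--$\delta$ continuity then comes from the quantitative form of $\nu_\Omega\ll\nu_A$: if $d([A],[B])<\delta$, the part of $A$ lying outside the $T$-invariant hull of $B$ has small $\nu_A$-mass, hence small $\nu_\Omega$-mass, which controls $1-\phi_T([B])$. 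Your approach is more elementary and does not even require the Lebesgue-space hypothesis (no ergodic decomposition is needed); the paper's approach is heavier but makes explicit the structural reason behind the continuity, namely that $\phi_T([A])=1$ means $A$ meets almost every ergodic component.
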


\begin{propos}\label{Propos2+}
Suppose that $T$ is an automorphism of a Lebesgue space ${(\Omega,
\mathfrak{F}, \mu)}$ and ${\mu(P_0)>0}.$ Then $\mathcal{N}$ is a
discontinuity point of~$\phi_T.$ Moreover, if $T$ is aperiodic then
${\phi_T([A])<1}$ iff $[A]$ is a discontinuity point of~$\phi_T.$
\end{propos}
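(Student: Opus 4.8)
The plan is to treat both assertions by the same device: in each situation I produce a sequence of classes $[A_k]$ that converges to a given class in the Fréchet–Nikodým metric but along which $\phi_T$ fails to converge to the right value, and the tool for manufacturing such $A_k$ is the Rokhlin–Halmos tower lemma applied to an aperiodic invariant subsystem of $T$. Throughout I will use that a measurable subset of a Lebesgue space, equipped with the induced $\sigma$-algebra and the (normalized) restricted measure, is again a Lebesgue space, and that the restriction of an aperiodic automorphism to an invariant measurable set is an aperiodic automorphism of that space.

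For the first claim, note $\phi_T(\mathcal{N})=\mu(\varnothing)=0$, so it is enough to find $[B_k]\to\mathcal{N}$ with $\liminf_k\phi_T([B_k])>0$. Since periodicity of a point is a $T$-invariant property, the aperiodic part $P_0$ is $T$-invariant, and $T|_{P_0}$ is an aperiodic automorphism of the Lebesgue space $(P_0,\mathfrak{F}|_{P_0},\mu|_{P_0})$, which has positive measure by hypothesis. Rokhlin's lemma then gives, for each $k\geq1$, a set $B_k\subseteq P_0$ with $B_k,TB_k,\dots,T^{k-1}B_k$ pairwise disjoint and $\mu\bigl(P_0\setminus\bigsqcup_{n=0}^{k-1}T^nB_k\bigr)<1/k$. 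Disjointness and $T$-invariance of $\mu$ give $\mu(B_k)=\tfrac1k\,\mu\bigl(\bigsqcup_{n=0}^{k-1}T^nB_k\bigr)\leq\mu(P_0)/k\to0$, i.e. $d([B_k],\mathcal{N})\to0$, whereas $\phi_T([B_k])\geq\mu\bigl(\bigsqcup_{n=0}^{k-1}T^nB_k\bigr)>\mu(P_0)-1/k\to\mu(P_0)>0$. Hence $\mathcal{N}$ is a discontinuity point of $\phi_T$.

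For the second claim, assume $T$ aperiodic. One implication is precisely Lemma~\ref{Propos2}: if $\phi_T([A])=1$ then $[A]$ is a continuity point. So it remains to show that $\phi_T([A])<1$ forces $[A]$ to be a discontinuity point. Put $U=\bigcup_{n\geq0}T^nA$; since $TU\subseteq U$ and $\mu(TU)=\mu(U)$, the set $U$ is $T$-invariant mod $0$, hence so is $V:=\Omega\setminus U$, with $\mu(V)=1-\phi_T([A])>0$. As $T$ is aperiodic, so is $T|_V$, and Rokhlin's lemma yields for each $k$ a set $C_k\subseteq V$ with $C_k,TC_k,\dots,T^{k-1}C_k$ disjoint and $\mu\bigl(V\setminus\bigsqcup_{n=0}^{k-1}T^nC_k\bigr)<1/k$, so $\mu(C_k)\leq\mu(V)/k\to0$. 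Set $A_k=A\cup C_k$. Since $C_k\subseteq V$ is disjoint from $A\subseteq U$, we get $d([A_k],[A])=\mu(C_k)\to0$; and since $V$ is invariant mod $0$, the orbit $\bigcup_{n\geq0}T^nC_k$ lies in $V$ mod $0$, disjoint from $U$, so $\phi_T([A_k])=\mu(U)+\mu\bigl(\bigcup_{n\geq0}T^nC_k\bigr)\geq\mu(U)+\mu\bigl(\bigsqcup_{n=0}^{k-1}T^nC_k\bigr)>\mu(U)+\mu(V)-1/k=1-1/k$. Therefore $\phi_T([A_k])\to1>\phi_T([A])$, so $\phi_T$ is discontinuous at $[A]$, which finishes the equivalence.

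The only genuinely delicate point — all of it routine but requiring care — is the mod-$0$ bookkeeping: verifying that $P_0$ and $U$ are truly $T$-invariant up to null sets, that their induced spaces are Lebesgue spaces on which the relevant restrictions of $T$ are aperiodic automorphisms (so that Rokhlin's lemma genuinely applies, after normalizing the measure), and that the perturbations $C_k$ sit entirely inside $V$, so that passing from $A$ to $A\cup C_k$ moves the point by exactly $\mu(C_k)$ in the metric while enlarging the saturation by nearly all of $V$. I expect no conceptual obstacle beyond keeping this accounting precise.
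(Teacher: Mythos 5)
Your proof is correct and follows essentially the same route as the paper: an application of the Rokhlin--Halmos lemma to the aperiodic invariant complement of the saturation of $A$, adding the base of a tall tower to $A$ so that the class moves by only $\mu(C_k)$ while $\phi_T$ jumps by nearly the measure of that complement, with the converse direction delegated to Lemma~\ref{Propos2}. Your explicit treatment of the first claim by restricting to $P_0$ is slightly more careful than the paper's terse ``without loss of generality assume $T$ is aperiodic,'' but it is the same idea.
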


Before proving these assertions, we remark that
Propositions~\ref{ProposPeriod} and~\ref{Propos2+} together imply
the following characterization of periodic automorphisms of a
Lebesgue space.
\begin{theorem}\label{TmPeriodic}
An automorphism $T$ of a Lebesgue space ${(\Omega, \mathfrak{F},
\mu)}$ is periodic iff $\phi_T$ is everywhere continuous.
\end{theorem}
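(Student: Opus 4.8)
The plan is to deduce Theorem~\ref{TmPeriodic} directly from Proposition~\ref{ProposPeriod} and Proposition~\ref{Propos2+}, so the only real work is to assemble these two halves correctly. The forward implication is immediate: if $T$ is periodic then Proposition~\ref{ProposPeriod} says $\phi_T$ is everywhere continuous, and there is nothing more to prove.

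For the converse, I would argue by contraposition. Suppose $T$ is not periodic, i.e.\ $\mu(P_0)>0$. Then the first assertion of Proposition~\ref{Propos2+} applies verbatim: the point $\mathcal{N}$ is a discontinuity point of $\phi_T$. Hence $\phi_T$ is not everywhere continuous. Combining the two directions gives the stated equivalence.

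\begin{proof}
If $T$ is periodic, then $\phi_T$ is everywhere continuous by Proposition~\ref{ProposPeriod}. Conversely, assume $T$ is not periodic. By the dichotomy $\mu(P_0)=0$ or the fact that in general $\mu(P_0)\geq 0$, non-periodicity of $T$ means precisely that $\mu(P_0)>0$. Proposition~\ref{Propos2+} then guarantees that $\mathcal{N}$ is a discontinuity point of $\phi_T$, so $\phi_T$ is not everywhere continuous. This proves the contrapositive of the remaining implication, and the theorem follows.
\end{proof}

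The only subtlety worth flagging is the logical step ``$T$ not periodic $\Rightarrow \mu(P_0)>0$'': by definition $T$ is periodic exactly when $\mu(P_0)=0$, so its negation is $\mu(P_0)>0$, which is exactly the hypothesis of Proposition~\ref{Propos2+}. There is no genuine obstacle here; the substance of the theorem lives entirely in the two cited results, and this statement is purely their conjunction. (I would not even need the finer ``moreover'' clause of Proposition~\ref{Propos2+} or Lemma~\ref{Propos2} for this particular theorem — those are used for the aperiodic ergodicity characterization, Theorem~\ref{Tm1}.)
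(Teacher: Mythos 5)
Your proof is correct and is exactly the paper's argument: the author explicitly notes that Theorem~\ref{TmPeriodic} is the conjunction of Proposition~\ref{ProposPeriod} (periodic $\Rightarrow$ everywhere continuous) and the first assertion of Proposition~\ref{Propos2+} ($\mu(P_0)>0$ $\Rightarrow$ $\mathcal{N}$ is a discontinuity point). Your observation that the ``moreover'' clause and Lemma~\ref{Propos2} are not needed here is also accurate.
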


\begin{proof}[of Lemma~\ref{Propos2}]
Consider the partition
$$
\Omega=\bigcup_{\alpha\in I}\Omega_\alpha
$$
of~$\Omega$ into ergodic components~${\Omega_\alpha}$ where $I$ is
the set of indices (see~\cite{Rud} for example). Express the
condition ~${\phi_T([A])=1}$ in terms of this ergodic decomposition:
$$
A=\bigcup_{\alpha\in I}A_\alpha,\ \ A_\alpha=A\cap\Omega_\alpha,
$$
and
$$
\mu\Bigl(\bigcup_{\alpha\in
I}\Omega_\alpha\Bigr)=1=\phi_T([A])=\mu\Bigl(\bigcup_{n\geq0}T^nA\Bigr)=\mu\Bigl(\bigcup_{n\geq0}T^n\bigcup_{\alpha\in
I}A_\alpha\Bigr)=\mu\Bigl(\bigcup_{\alpha\in
I}\bigcup_{n\geq0}T^nA_\alpha\Bigr).
$$
It follows that
$$
0=\mu\Bigl(\Bigl(\bigcup_{\alpha\in
I}\Omega_\alpha\Bigr)\setminus\bigcup_{\alpha\in
I}\Bigl(\bigcup_{n\geq0}T^nA_\alpha\Bigr)\Bigr)=\mu\Bigl(\bigcup_{\alpha\in
I}\Bigl(\Omega_\alpha\setminus\Bigl(\bigcup_{n\geq0}T^nA_\alpha\Bigr)\Bigr)\Bigr).
$$
Consequently, for each~${J\subseteq I},$ we have
$$
\mu\Bigl(\bigcup_{\alpha\in
J}\Omega_\alpha\setminus\Bigl(\bigcup_{n\geq0}T^nA_\alpha\Bigr)\Bigr)=0.
$$
It is equivalent to
\begin{equation}\label{eq:AbsCont}
\mu\Bigl(\bigcup_{\alpha\in
J}\Omega_\alpha\Bigr)=\mu\Bigl(\bigcup_{\alpha\in
J}\bigcup_{n\geq0}T^nA_\alpha\Bigr).
\end{equation}
On the set $I$ of indices consider the family of
measures~${\{\nu_C\}}_{C\in\mathfrak{F}}$ defined as
$$
\nu_C(J)=\mu\Bigl(\bigcup_{\alpha\in J}C_\alpha\Bigr),\ \ J\subseteq
I.
$$
We claim that~\eqref{eq:AbsCont} guarantees the equivalence of the
probability measure~${\nu_\Omega}$ and the measure~${\nu_A}.$ It is
clear that ${\nu_A\ll\nu_\Omega}.$ Suppose that the opposite is
false. Then there exists a set~${J\subset I}$ with
$$
\nu_A(J)=0, \ \ \text{but}\ \ \ \nu_\Omega(J)>0.
$$
It follows that
\begin{align*}
0=\mu\Bigl(\bigcup_{n\geq0}T^n\Bigl(\bigcup_{\alpha\in
J}A_\alpha\Bigr)\Bigr)=\mu\Bigl(\bigcup_{\alpha\in
J}\Bigl(\bigcup_{n\geq0}T^nA_\alpha\Bigr)\Bigr)=\mu\Bigl(\bigcup_{\alpha\in
J}\Omega_\alpha\Bigr)=\nu_\Omega(J)>0,
\end{align*}
which is a contradiction. Consequently, for each~${\varepsilon>0}$
there exists ${\delta=\delta(A,\varepsilon)>0}$ such that
${\nu_A(J)<\delta}$ implies ${\nu_\Omega(J)<\varepsilon}.$

Now we are ready to prove the continuity of $\phi_T$ at the point
$[A].$ For arbitrary $\varepsilon>0,$ take $\delta>0$ as in the
previous discussion. For ${B\in\mathfrak{F}}$ with ${\mu(B)>0},$
because the set ${\bigcup_{n\in\mathbb{Z}}T^nB}$ is invariant
under~$T,$ there exists a set ${J=J(B)\subset I}$ of indices such
that
$$
\bigcup_{\alpha\in J}\Omega_\alpha=\bigcup_{n\in\mathbb{Z}}T^nB\ \ \
(\mathrm{mod}\ \mu).
$$
This yields
$$
\mu\Bigl(\bigcup_{\alpha\in
J}\Omega_\alpha\Bigr)=\mu\Bigl(\bigcup_{n\in\mathbb{Z}}T^nB\Bigr)=\mu\Bigl(\bigcup_{n\geq0}T^nB\Bigr)
$$
and
$$
\mu\Bigl(\bigcup_{\alpha\in I\setminus J}B_\alpha\Bigr)=0.
$$
Assume that ${d([A],[B])<\delta}.$ We have
\begin{align*}
d([A],[B])&=\mu(A\triangle B)=\mu\Bigl(\bigcup_{\alpha\in
J}(A_\alpha\triangle B_\alpha)\Bigr)+\mu\Bigl(\bigcup_{\alpha\in
I\setminus J}(A_\alpha\triangle
B_\alpha)\Bigr)=\\
&=\mu\Bigl(\bigcup_{\alpha\in I\setminus
J}A_\alpha\Bigr)+\mu\Bigl(\bigcup_{\alpha\in J}(A_\alpha\triangle
B_\alpha)\Bigr)=\nu_A(I\setminus J)+\mu\Bigl(\bigcup_{\alpha\in
J}(A_\alpha\triangle B_\alpha)\Bigr)<\delta,
\end{align*}
which implies ${\nu_A(I\setminus J)<\delta}$ and, hence,
${\nu_\Omega(I\setminus J)<\varepsilon}.$ It follows that
\begin{align*}
\phi_T([A])-\phi_T([B])&=1-\mu\Bigl(\bigcup_{n\geq0}T^nB\Bigr)=\\
&=\mu\Bigl(\bigcup_{\alpha\in
I}\Omega_\alpha\Bigr)-\mu\Bigl(\bigcup_{\alpha\in J}\Omega_\alpha\Bigr)=\\
&=\nu_\Omega(I\setminus J)<\varepsilon,
\end{align*}
completing the proof.
\end{proof}

\begin{proof}[of Proposition~\ref{Propos2+}]
Without loss of generality, assume that the automorphism $T$ is
aperiodic.  Suppose that ${\phi_T([A])<1},$ and then the set
${B=\bigcup\limits_{n\in\mathbb{Z}}T^nA}$ of measure ${\mu(B)<1}$ is
$T$-invariant.

The restriction of~$T$ to ${\Omega\setminus B}$ is aperiodic and
preserves  the probability measure~$\mu_{\Omega\setminus B.}$
Applying the Rokhlin--Halmos lemma (see~\cite{KSF} for example), we
find that for~${\varepsilon>0}$ and $n_0\geq1$ there exists a set
${E\subset \Omega\setminus B}$ such that the sets ${T^kE}$ for
${0\leq k\leq n_0-1}$ are disjoint and satisfy the inequality
$$
{\mu_{\Omega\setminus
B}\Bigl(\bigcup_{k=0}^{n_0-1}T^kE\Bigr)>1-\varepsilon}.
$$
It is clear that $\mu_{\Omega\setminus B}(E)<\frac{1}{n_0}.$ Put
${C=A\cup E}.$ Then
$$
d([A],[C])=\mu(A\triangle C)=\mu(E)<\frac{1}{n_0}\mu(\Omega\setminus
B)
$$
and
\begin{align*}
\phi_T([C])&=\mu\Bigl(\bigcup_{n\geq0}T^nC\Bigr)=\mu\Bigl(\bigcup_{n\geq0}T^nA\Bigr)+\mu\Bigl(\bigcup_{n\geq0}T^nE\Bigr)\geq\\
&\geq\phi_T([A])+\mu\Bigl(\bigcup_{k=0}^{n_0-1}T^kE\Bigr)>\phi_T([A])+(1-\varepsilon)\mu(\Omega\setminus B).\\
\end{align*}
In this way, taking~${\varepsilon=1/2}$ and sufficiently large
${n_0\geq1}$ we obtain $d([A],[C])$ is small enough but
\begin{equation}\label{eq:discont}
{|\phi_T([A])-\phi_T([C])|>\mu(\Omega\setminus B)/2}.
\end{equation}
This proves that $\phi_T$ is discontinuous at the point~${[A]}$ with
${\phi_T([A])<1}.$

Now, if $[A]$ is a discontinuity point of ${\phi_T([A])}$ then
Lemma~\ref{Propos2} tells us that ${\phi_T([A])<1}.$ The proof is
complete.
\end{proof}


\noindent{\bf\thesection.3. Ergodic and totally ergodic
transformations.} The following characterization of ergodic
transformations is also direct corollary of
Proposition~\ref{Propos2+}.

\begin{theorem}\label{Tm1}
An aperiodic automorphism $T$ of a Lebesgue space
${(\Omega,\mathfrak{F},\mu)}$ is ergodic iff
${\phi_T:\mathcal{F}\to[0,1]}$ is continuous everywhere except the
point $\mathcal{N}$.
\end{theorem}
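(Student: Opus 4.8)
The plan is to derive Theorem~\ref{Tm1} directly from Proposition~\ref{Propos2+}, using the standard fact (recalled in the introduction) that $T$ is ergodic iff $\phi_T([A])=1$ for every $A\notin\mathcal{N}$. Since $T$ is assumed aperiodic, Proposition~\ref{Propos2+} applies in full strength: $\mathcal{N}$ is always a discontinuity point of $\phi_T$, and for $[A]\neq\mathcal{N}$ we have the equivalence ``$\phi_T([A])<1$'' $\iff$ ``$[A]$ is a discontinuity point of $\phi_T$.'' The whole argument is then a short logical manipulation of these two facts.

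First I would prove the forward direction. Assume $T$ is ergodic. Then for every $[A]\neq\mathcal{N}$ we have $\phi_T([A])=1$, so by the second clause of Proposition~\ref{Propos2+} (equivalently, by Lemma~\ref{Propos2}) every such $[A]$ is a continuity point of $\phi_T$. On the other hand, the first clause of Proposition~\ref{Propos2+} guarantees that $\mathcal{N}$ itself is a discontinuity point. Hence $\phi_T$ is continuous precisely on $\mathcal{F}\setminus\{\mathcal{N}\}$, which is the required conclusion.

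For the converse, assume $\phi_T$ is continuous everywhere except at $\mathcal{N}$. Suppose toward a contradiction that $T$ is not ergodic. Then there exists $[A]\notin\mathcal{N}$ with $\phi_T([A])<1$. By Proposition~\ref{Propos2+} (aperiodic case), this $[A]$ is then a discontinuity point of $\phi_T$ different from $\mathcal{N}$, contradicting our assumption. Therefore $\phi_T([A])=1$ for all $[A]\notin\mathcal{N}$, i.e.\ $T$ is ergodic.

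There is essentially no obstacle here: all the analytic work has already been carried out in Lemma~\ref{Propos2} and Proposition~\ref{Propos2+}. The only point requiring a word of care is that the aperiodicity hypothesis is exactly what licenses the use of the ``moreover'' part of Proposition~\ref{Propos2+}; without it one would only get that $\mathcal{N}$ is a discontinuity point and could not conclude the characterization of \emph{all} continuity points. I would also note explicitly, for completeness, that the isolated discontinuity at $\mathcal{N}$ is unavoidable precisely because $\mu(P_0)>0$ in the aperiodic setting, so the statement cannot be strengthened to ``$\phi_T$ is everywhere continuous'' (that case is governed instead by Theorem~\ref{TmPeriodic}).
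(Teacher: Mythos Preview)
Your proof is correct and follows essentially the same route as the paper: both directions are deduced from Lemma~\ref{Propos2} and Proposition~\ref{Propos2+}, with the only cosmetic difference that for the converse the paper exhibits a nontrivial $T$-invariant set $A$ to get $\phi_T([A])=\mu(A)<1$, whereas you invoke directly the equivalent characterization ``$T$ ergodic $\iff$ $\phi_T\equiv 1$ on $\mathcal{F}\setminus\{\mathcal{N}\}$.''
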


\begin{proof}
If $T$ is ergodic then ${\phi_T([A])=1}$ for ${\mu(A)>0}.$
Consequently, Lemma~\ref{Propos2} shows that $\phi_T$ is continuous
at such points. However, Proposition~\ref{Propos2+} states that
$\mathcal{N}$ is a discontinuity point.

Now, if $T$ is not ergodic then there exists an
$\mathfrak{F}$-measurable $T$-invariant set $A$ with ${0<\mu(A)<1}$.
It follows that ${\phi_T([A])=\mu(A)<1}$ and
Proposition~\ref{Propos2+} implies that $\phi_T$ has a discontinuity
at $[A].$
\end{proof}

As another application of Proposition~\ref{Propos2+}, we discuss
here a characterization of totally ergodic transformations, which
means that the powers $T^n$ for all $n\in\mathbb{N}$ are ergodic
transformations.

Define a new map ${\phi_T^*:\mathcal{F}\to[0,1]}$ as
$$
\phi_T^*([A])=\inf_{m\in\mathbb{N}}\phi_{T^m}([A]).
$$

\begin{theorem}\label{Tm2}
An aperiodic automorphism $T$ of a Lebesgue space
${(\Omega,\mathfrak{F},\mu)}$ is totally ergodic iff
${\phi^*_T:\mathcal{F}\to[0,1]}$ is continuous everywhere except the
point~$\mathcal{N}$.
\end{theorem}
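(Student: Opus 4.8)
The strategy is to reduce the statement about $\phi_T^*$ to the already-established results about $\phi_T$ (applied to the powers $T^m$), paralleling closely the proof of Theorem~\ref{Tm1}. First I would establish the easy direction: if $T$ is totally ergodic, then every $T^m$ is ergodic, so for each $m$ and each $A$ with $\mu(A)>0$ we have $\phi_{T^m}([A])=1$, whence $\phi_T^*([A])=\inf_m\phi_{T^m}([A])=1$. For continuity at such a point, one shows that an infimum of functions each continuous at $[A]$ is again continuous there — here I would use that Lemma~\ref{Propos2} gives continuity of $\phi_{T^m}$ at $[A]$, but since the infimum of infinitely many continuous functions need not be continuous, I would instead argue directly: given $\varepsilon>0$, use the quantitative continuity estimate from the proof of Lemma~\ref{Propos2} (the $\delta=\delta(A,\varepsilon)$ there, which in fact can be taken uniform in $m$ since the ergodic decomposition argument only used $\phi_{T^m}([A])=1$) to get $|\phi_{T^m}([A])-\phi_{T^m}([B])|<\varepsilon$ for all $m$ simultaneously when $d([A],[B])<\delta$, and then pass to the infimum: $|\phi_T^*([A])-\phi_T^*([B])|\le\sup_m|\phi_{T^m}([A])-\phi_{T^m}([B])|\le\varepsilon$. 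That $\mathcal{N}$ is a discontinuity point is immediate since $\phi_T^*(\mathcal{N})=0$ while the construction in Proposition~\ref{Propos2+} (applied to $T$ itself, i.e.\ $m=1$) produces sets $C$ near $\mathcal{N}$ with $\phi_T^*([C])\ge$ something bounded away from $0$; one must only check $\phi_T^*([C])$, not just $\phi_T([C])$, stays large, which follows because for the Rokhlin tower set $E$ one has $\bigcup_{n\ge0}T^{mn}(T^jE)$ over $0\le j<m$ still covering a large part of $\Omega$ for every fixed $m$, or more simply because $\phi_{T^m}\ge\mu$ always and one can re-run the tower argument with towers of height a multiple of $m$.

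For the converse, suppose $T$ is \emph{not} totally ergodic, so some power $T^p$ is not ergodic. Then there is a $T^p$-invariant set $A$ with $0<\mu(A)<1$. The key point is that $\phi_T^*([A])=\inf_m\phi_{T^m}([A])\le\phi_{T^p}([A])=\mu(A)<1$, so $[A]$ is a candidate discontinuity point. To actually produce the discontinuity I would apply the tower construction of Proposition~\ref{Propos2+} to the aperiodic automorphism $T^p$ restricted to the complement of $B=\bigcup_{n\in\mathbb{Z}}(T^p)^nA$: this yields sets $C=A\cup E$ with $d([A],[C])$ arbitrarily small but $\phi_{T^p}([C])>\phi_{T^p}([A])+(1-\varepsilon)\mu(\Omega\setminus B)$, hence $\phi_T^*([C])\le\phi_{T^p}([C])$ — wait, that is the wrong direction. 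The correct move is: I need to show $\phi_T^*([C])$ stays \emph{close to} $\phi_T^*([A])$ or jumps; since $\phi_T^*$ is an infimum, producing an upward jump requires bounding all $\phi_{T^m}([C])$ from below simultaneously, and producing a downward jump requires one $\phi_{T^m}([C])$ to drop. Here one should instead perturb so as to \emph{lower} the infimum: keep $[A]$ with $\phi_T^*([A])$ possibly large while finding nearby $[C]$ with some $\phi_{T^m}([C])$ small — but $\phi_{T^m}$ can only increase... This is the genuine obstacle.

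The resolution I would pursue: show instead that $\phi_T^*([A])<1$ for the $T^p$-invariant $A$, and that $\phi_T^*$ restricted to the sub-algebra of sets inside a single $T$-ergodic-but-not-$T^p$-ergodic component behaves like $\phi_{T^p}$ there, then invoke Proposition~\ref{Propos2+} componentwise. Concretely: if $T^p$ is non-ergodic, pick a minimal such $p$ (prime), decompose $\Omega$ into the cyclic blocks permuted by $T$, and on one block $\Omega_0$ (an ergodic component of $T^p$, with $T^p|_{\Omega_0}$ aperiodic) apply Proposition~\ref{Propos2+}: for $A_0\subset\Omega_0$ with $\phi_{T^p|_{\Omega_0}}([A_0])<1$ there are nearby $C_0$ with $|\phi_{T^p}([A_0])-\phi_{T^p}([C_0])|$ bounded below. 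Translating back via $d([A],[C])=d([A_0],[C_0])$ and noting $\phi_T^*([A])=\min(\phi_{T^p}([A]),\inf_{m}\phi_{T^m}([A]))$ where on $\Omega_0$ the relevant minimum is realized near $\phi_{T^p}$, one gets the discontinuity of $\phi_T^*$ at $[A]$. The main obstacle, then, is precisely this bookkeeping: verifying that the infimum over all $m$ of $\phi_{T^m}([C])$ does not inadvertently smooth out the jump supplied by the single index $m=p$, which requires knowing that for $C$ differing from a $T^p$-invariant $A$ by a small Rokhlin-tower set, $\phi_{T^{m}}([C])$ stays near $\phi_{T^m}([A])$ for every $m$, i.e.\ a \emph{uniform-in-$m$} version of Lemma~\ref{lemma1}/Lemma~\ref{Propos2}, which fails in general and so must be arranged by a careful choice of the tower height as a common multiple of the finitely many "dangerous" indices.
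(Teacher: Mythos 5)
Your forward direction is fine (indeed it is simpler than you make it: total ergodicity forces $\phi^*_T\equiv1$ on all non-null classes, so $\phi^*_T$ is locally constant there and no uniform-in-$m$ continuity estimate is needed). The converse, however, has a genuine gap that you yourself flag but do not close. You correctly observe that to produce an upward jump of the infimum $\phi^*_T$ at a $T^p$-invariant set $A$ (where $\phi^*_T([A])=\mu(A)$, since $\phi_{T^m}\geq\mu$ always), one must bound $\phi_{T^m}([C])$ away from $\mu(A)$ \emph{for every} $m$ simultaneously. Your proposed fix --- choose the Rokhlin tower height as a common multiple of ``the finitely many dangerous indices'' --- cannot work as stated: the set of $m$ with $T^m$ non-ergodic is infinite (it is a union of arithmetic progressions $k\mathbb{N}$; only the set of minimal generators is finite), and for any single added set $E$ of small measure one has, for $m$ much larger than the tower height, $\mu\bigl(\bigcup_{j\geq0}T^{jm}E\bigr)$ of order $\mu(E)$ only, so $\phi_{T^m}([A\cup E])$ can be as close to $\mu(A)$ as $\mu(E)$ permits and the infimum is not displaced. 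No single bounded perturbation controls all powers at once.

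The paper's resolution, which is the idea missing from your sketch, is threefold. First, let $\mathcal{K}$ be the (finite) set of minimal $k$ with $T^k$ non-ergodic, set $\kappa=\prod_{k\in\mathcal{K}}k$, and take $A$ invariant under $T^\kappa$; the monotonicity $\phi_{T^{k'n}}\geq\phi_{T^{n\kappa}}$ (valid because $k'n$ divides $n\kappa$) reduces the infimum over \emph{all} non-ergodic powers to the infimum over the multiples of the single number $\kappa$. Second, one runs the Rokhlin--Halmos construction of Proposition~\ref{Propos2+} separately for each transformation $T^{n\kappa}$, obtaining sets $E_n$ with $\mu(E_n)<\delta/2^n$ and $\phi_{T^{n\kappa}}([A\cup E_n])-\mu(A)>\frac{1}{2}\mu(\Omega\setminus A)$. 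Third, one takes $B=A\cup\bigcup_{n\geq1}E_n$: the countable union still satisfies $d([A],[B])<\delta$, while monotonicity of each $\phi_{T^{n\kappa}}$ in the set argument transfers the jump from $A\cup E_n$ to $B$ for every $n$ at once. It is this combination --- one tower per multiple of $\kappa$, with summable bases, glued by a union --- rather than a single cleverly chosen tower, that prevents the infimum from smoothing out the jump; without it your argument does not go through.
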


\begin{proof}
If $T$ is totally ergodic then $\phi_{T^k}([A])=1$ for all $k\geq1$
and all ${A\not\in\mathcal{N}}.$ Hence, ${\phi^*([A])=1},$ and
therefore $\phi^*$ is continuous at that point. Indeed, for
arbitrary ${\varepsilon>0}$ we take ${0<\delta<\mu(A)}.$ Then the
inequality ${d([A],[B])<\delta}$ implies that ${\mu(B)>0}$ and
therefore
$$
|\phi^*_T([A])-\phi^*_T(B)|=|1-1|=0<\varepsilon.
$$
It is evident, that the class~$\mathcal{N}$ is a discontinuity point
of~${\phi^*_T}.$

Assume now that $T$ is not totally ergodic. Take the
smallest~${k_0\geq1}$ such that $T^{k_0}$ is not ergodic. Therefore
all powers $T^{nk_0}$ for ${n\geq 1}$ are not ergodic either
(because the invariant sets of $T^{k_0}$ are invariant under
$T^{nk_0}$ for all ${n\geq 1}$). It is clear that there are only
finitely many, at most $k_0,$ such sequences of non ergodic
transformations~${\{T^{nk}\}_{n\geq 1}}.$  Denote by~$\mathcal{K}$
the finite set of possible values of $k.$ Thus,
${k_0\in\mathcal{K}}$ and ${|\mathcal{K}|\leq k_0}.$ Put
${\kappa=\prod_{k\in\mathcal{K}}k}.$ Take  a nontrivial invariant
set ${A\not\in\mathcal{N}}$ of the transformation~${T^\kappa}.$ We
claim that it is a discontinuity point of~$\phi^*_T.$

By Theorem~\ref{Tm1}, all transformations~$\phi_{T^{n\kappa}}$ for
${n\geq1}$ (being non ergodic) are disconti\-nuous at $[A].$
Considering~\eqref{eq:discont}, we conclude that for sufficiently
small~$\delta>0$ and all ${n\geq1}$ there exist some sets
${E_n\subset\Omega\setminus A}$ such that ${B_n=A\cup E_n}$ satisfy
$$
d([A],[B_n])=\mu(E_n)<\frac{\delta}{2^n}\ \ \ \text{and}\ \ \
\phi_{T^{n\kappa}}([B_n])-\phi_{T^{n\kappa}}([A])>\frac{1}{2}\mu(\Omega\setminus
A).
$$
For the set ${B=A\bigcup(\bigcup_{n\geq1}E_n)}$ it is easy to see
that ${d([A],[B])=\mu(\bigcup_{n\geq1}E_n)<\delta},$ and for all
${n\geq 1},$
\begin{equation}\label{eq:estimate}
\phi_{T^{n\kappa}}([B])-\phi_{T^{n\kappa}}([A])\geq\phi_{T^{n\kappa}}([B_n])-\phi_{T^{n\kappa}}([A])>\frac{1}{2}\mu(\Omega\setminus
A).
\end{equation}
Considering the value~${\phi^*_T([B])},$ we conclude that
$$
\phi^*_T([B])=\inf_{m\in\mathbb{N}}\phi_{T^m}([B])=\min_{k\in\mathcal{K}}\inf_{n\in\mathbb{N}}\phi_{T^{kn}}([B])=\inf_{n\in\mathbb{N}}\phi_{T^{k'n}}([B])
$$
for some~${k'=k'(B)\in\mathcal{K}}.$ The second equality is true
because for the ergodic transfor\-mation~$T^m$ we have
${\phi_{T^m}([B])=1},$ and therefore the infimum is reached on non
ergodic transfor\-mations.

For arbitrary $\varepsilon>0$ there exists a
number~${n'\in\mathbb{N}}$ such that
\begin{equation}\label{eq:infim}
\phi^*_T([B])=\inf_{n\in\mathbb{N}}\phi_{T^{k'n}}([B])\geq\phi_{T^{k'n'}}([B])-\varepsilon.
\end{equation}
Taking into account the estimates~\eqref{eq:estimate} and
\eqref{eq:infim}, the monotonicity property
$$
\phi_{T^{k'n'}}\geq\phi_{T^{n'\kappa}},
$$
and the equality
$$
{\phi^*_T([A])=\phi_{T^{n'\kappa}}([A])=\mu(A)},
$$
we obtain
\begin{align*}
\phi^*([B])-\phi^*([A])&\geq\phi_{T^{k'n'}}([B])-\varepsilon-\phi^*([A])\geq\\
&\geq\phi_{T^{n'\kappa}}([B])-\phi_{T^{n'\kappa}}([A])-\varepsilon>\frac{1}{2}\mu(\Omega\setminus
A)-\varepsilon.
\end{align*}
\end{proof}
Take sufficiently small $\varepsilon>0$ so that the expression
${\frac{1}{2}\mu(\Omega\setminus A)-\varepsilon}$ is positive. Then
the last estimate guarantees that $\phi^*_T$ is discontinuous
at~$[A].$ The proof is complete.

In conclusion, we remark that it would be interesting to find a
related characterization for transformations with a different type
of mixing property (see~\cite{KY07} for example).

ACKNOWLEDGMENTS.  The work was supported by the program of
fundamental scientific  research of SB RAS № I.1.2., project №
0314-2016-0005.

\renewcommand{\refname}{References}

\end{document}